\begin{document}

 \newtheorem{theorem}{Theorem}[section]
 \newtheorem{corollary}[theorem]{Corollary}
 \newtheorem{lemma}[theorem]{Lemma}{\rm}
 \newtheorem{proposition}[theorem]{Proposition}

 \newtheorem{defn}[theorem]{Definition}{\rm}
 \newtheorem{assumption}[theorem]{Assumption}
 \newtheorem{remark}[theorem]{Remark}
 \newtheorem{ex}{Example}
\numberwithin{equation}{section}
\def\la{\langle}
\def\ra{\rangle}
\def\glexe{\leq_{gl}\,}
\def\glex{<_{gl}\,}
\def\e{{\rm e}}

\def\fac{{\rm !}}
\def\x{\mathbf{x}}
\def\P{\mathbb{P}}
\def\S{\mathbf{S}}
\def\h{\mathbf{h}}
\def\m{\mathbf{m}}
\def\y{\mathbf{y}}
\def\bz{\mathbf{z}}
\def\F{\mathcal{F}}
\def\R{\mathbb{R}}
\def\T{\mathbf{T}}
\def\N{\mathbb{N}}
\def\D{\mathbf{D}}
\def\V{\mathbf{V}}
\def\U{\mathbf{U}}
\def\K{\mathbf{K}}
\def\Q{\mathbf{Q}}
\def\M{\mathbf{M}}
\def\oM{\overline{\mathbf{M}}}
\def\O{\mathbf{O}}
\def\C{\mathbb{C}}
\def\P{\mathbb{P}}
\def\Z{\mathbb{Z}}
\def\bZ{\mathbf{Z}}
\def\H{\mathcal{H}}
\def\A{\mathbf{A}}
\def\V{\mathbf{V}}
\def\AA{\overline{\mathbf{A}}}
\def\B{\mathbf{B}}
\def\c{\mathbf{C}}
\def\L{\mathcal{L}}
\def\bS{\mathbf{S}}
\def\H{\mathcal{H}}
\def\I{\mathbf{I}}
\def\Y{\mathbf{Y}}
\def\X{\mathbf{X}}
\def\cX{\mathbf{X}}
\def\G{\mathbf{G}}
\def\f{\mathbf{f}}
\def\z{\mathbf{z}}
\def\v{\mathbf{v}}
\def\y{\mathbf{y}}
\def\d{\hat{d}}
\def\x{\mathbf{x}}
\def\bI{\mathbf{I}}
\def\y{\mathbf{y}}
\def\g{\mathbf{g}}
\def\w{\mathbf{w}}
\def\b{\mathbf{b}}
\def\a{\mathbf{a}}
\def\p{\mathbf{p}}
\def\u{\mathbf{u}}
\def\bv{\mathbf{v}}
\def\q{\mathbf{q}}
\def\e{\mathbf{e}}
\def\s{\mathcal{S}}
\def\cc{\mathcal{C}}
\def\co{{\rm co}\,}
\def\tg{\tilde{g}}
\def\tx{\tilde{\x}}
\def\tg{\tilde{g}}
\def\tA{\tilde{\A}}
\def\bell{\boldsymbol{\ell}}
\def\bxi{\boldsymbol{\xi}}
\def\balpha{\boldsymbol{\alpha}}
\def\bbeta{\boldsymbol{\beta}}
\def\bgamma{\boldsymbol{\gamma}}
\def\bpsi{\boldsymbol{\psi}}
\def\supmu{{\rm supp}\,\mu}
\def\supp{{\rm supp}\,}
\def\cd{\mathcal{C}_d}
\def\cok{\mathcal{C}_{\K}}
\def\cop{COP}
\def\vol{{\rm vol}\,}
\def\om{\mathbf{\Omega}}
\def\f{\mathscr{F}}
\def\la{\langle\langle}
\def\ra{\rangle\rangle}
\def\blambda{\boldsymbol{\lambda}}
\def\btheta{\boldsymbol{\theta}}
\def\bphi{\boldsymbol{\phi}}
\def\bpsi{\boldsymbol{\psi}}
\def\bnu{\boldsymbol{\nu}}
\def\bom{\boldsymbol{\Omega}}
\def\fac{{\rm !}}
\def\tM{\hat{\M}}
\def\tv{\hat{\v}}

\title[Classification]{On the Christoffel function and classification in data analysis}
\thanks{Research supported by the AI Interdisciplinary Institute ANITI  funding through the french program
\emph{Investing for the Future PI3A} under the grant agreement number ANR-19-PI3A-0004. The author is also affiliated with IPAL-CNRS laboratory, Singapore.}

\author{Jean B. Lasserre}
\address{LAAS-CNRS and Institute of Mathematics\\
University of Toulouse\\
LAAS, 7 avenue du Colonel Roche\\
31077 Toulouse C\'edex 4, France\\
Tel: +33561336415}
\email{lasserre@laas.fr}

\date{}
\begin{abstract}
We show that the empirical Christoffel function associated with a cloud of
 finitely many points sampled from a distribution, 
 can provide a simple tool for supervised classification in data analysis, with good generalization properties.
\end{abstract}

\maketitle

\section{Introduction}
In this note we are mainly concerned with supervised classification with noiseless deterministic labels
where the objects of interest $\x\in\X$ belong to $m$ classes with supports
$\X_j\subset\X\subset\R^n$, $j\in [m]$ (with $[m]=\{1,\ldots,m\}=:\Y$). 
The supports satisfy $\X_i\cap\X_j=\emptyset$ for all $i,j$ with $i\neq j$. 
The data set consists of clouds of finitely many points $(\x(i))\subset \X_j$ sampled from an underlying distribution $\phi_j$ on $\X_j$, $j\in [m]$. 
In this situation, an \emph{exact} classifier $f:\X\to\Y$, selects $j=:f(\x)$ whenever $\x\in\X_j$. 
When constructing a classifier from a sample of  data points, as e.g. in machine learning, a sensitive issue is
its  generalization properties when applied on a test set different from the training set.
For the reader interested  in recent developments on various techniques and issues in supervised and unsupervised 
classification, we refer to e.g. the book \cite{data-driven} and the many references therein.

\paragraph{Contribution} We first introduce a simple and natural ideal classifier $f_t:\X\to \Y$,
with nice asymptotic properties as $t$ increases. It is based on the 
Christoffel function $\Lambda^\mu_t$ associated with the joint distribution 
$d\mu(\x,y)$ on $\X\times\Y$. As $\mu$ is supported on the graph
$\{(\x,f(\x)):\x\in\X\,\}$ of the exact classifier $f$, recent
results of \cite{constructive} transported in our context
suggest that the classifier $f_t(\x):=\arg\max_{y\in \Y}\Lambda^\mu_t(\x,y)$
should approximate $f$ nicely. This is the case and indeed, by a slight modification of the definition of 
$\Lambda^\mu_t$, we show that $f_t$ is simply expressed in terms of the 
Christoffel functions $\Lambda^{\phi_j}_t$ of the $\phi_j$; namely,
$f_t(\x)=\arg\max_k\Lambda^{\phi_k}_t(\x)$. Notice that this simple form 
of $f_t$ mathematically justifies for supervised classification,
the intuitive argument that $\Lambda^{\phi_j}_t(\x)>\Lambda^{\phi_k}_t(\x)$, $\forall k\neq j$, whenever $t$ is sufficiently large and $\x\in\X_j$. 
Indeed as $\x\in \X_j$ is outside the support $\X_k$ of $\phi_k$, for every $k\neq j$,
the ``score" $\Lambda^{\phi_k}_t(\x)$ is close to zero
for sufficiently large $t$, as it decreases exponentially fast to zero 
(while the decrease of $\Lambda^{\phi_j}_t(\x)$ is at most polynomial in $t$).

We next consider the practical case where we only have access 
to a discrete sample of points in each class $\X_j$ (e.g., the training set in Machine Learning)
so that $\Lambda^{\phi_j}_t$ is not available. We provide a \emph{data-driven} analogue of
the previous result which, as expected, is in terms of the Christoffel functions $\Lambda^{\phi_{j,N}}_t$
associated with the discrete empirical measures $\phi_{j,N}$. Namely the empirical discrete analogue 
$f^N_t$ of the classifier $f_t$ simply reads $f^N_t(\x)=\arg\max_k\Lambda^{\phi_{k,N}}_t(\x)$, and 
has same properties  as $f_t$, but of course in an almost-sure sense with respect to random samples. 
In particular it shows good generalization properties. Indeed with $\varepsilon>0$ fixed and
$t$ sufficiently large, with probability $1$ (with respect to random samples), 
$f^N_t(\x)=j$ for every $j\in [m]$ and all $\x\in\X_j$ at distance at least $\varepsilon$ from the boundary $\partial\X_j$,
for sufficiently large $N$.

Finally, we  also briefly discuss more general joint distributions of pairs $(\x,y)$
(where possibly $\X_i\cap\X_j\neq\emptyset$  for some  $(i,j)$)
which covers practical cases where some misclassification may occur and/or some ambiguity is allowed.
 \subsection{Notation, definitions and preliminary results}
Let $\R[\x]$ denote the ring of real polynomials in the variables $\x=(x_1,\ldots,x_n)$ and $\R[\x]_t\subset\R[\x]$ be its subset 
of polynomials of total degree at most $t$. Let $\N^n_t:=\{\balpha\in\N^n:\vert\balpha\vert\leq t\}$
(where $\vert\balpha\vert=\sum_i\alpha_i$) with cardinal $s(t)={n+t\choose n}$. Let $\bv_t(\x)=(\x^{\balpha})_{\balpha\in\N^n_t}$ 
be the vector of monomials up to degree $t$.

The support of a Borel measure $\mu$ on $\R^n$ is the smallest closed set $A$ such that
$\mu(\R^n\setminus A)=0$, and such a  set $A$ is unique.

\paragraph{Moment matrix}
Let $\phi$ be a Borel measure whose support $\bom\subset\R^n$ is compact with nonempty interior. Its 
\emph{moment matrix} of order (or degree) $t$ is the real symmetric matrix $\M_t(\phi)$
with rows and columns indexed by $\N^n_t$, and with entries
\[\M_t(\phi)(\balpha,\bbeta)\,:=\,\int_{\bom}\x^{\balpha+\bbeta}\,d\phi\,=\,\phi_{\balpha+\bbeta}\,,\quad\balpha,\bbeta\in\N^n_t\,.\]
Then necessarily $\M_t$ is positive semidefinite for all $t$, denoted $\M_t(\phi)\succeq0$. 

\paragraph{Christoffel function} 
If $\bom$ has nonempty interior then $\M_t$ is positive definite for all $t$, denoted $\M_t(\phi)\succ0$. 
Let $(P_{\balpha})_{\balpha\in\N^n}\subset\R[\x]$ be a family of polynomials, orthonormal with respect to $\phi$, i.e.,
\[\int_{\bom}P_{\balpha}\,P_{\bbeta}\,d\phi\,=\,\delta_{\balpha=\bbeta}\,,\quad\forall \balpha,\bbeta\in\N^n\,.\]
Then the 
Christoffel function (CF) $\Lambda^{\phi}_t:\R^n\to\R_+$  associated with $\phi$, is defined by
\begin{equation}
\label{def-christo-1}
\x\mapsto \Lambda^\phi_t(\x)\,:=\,\left[\sum_{\balpha\in\N^n_t} P_{\balpha}(\x)^2\right]^{-1}\,,\quad\forall \x\in\R^n\,,\end{equation}
and recalling that $\M_t(\mu)$ is nonsingular, it turns out that 
\begin{equation}
\label{def-christo-11}
\Lambda^\phi_t(\x)\,=\,\left[\,\v_t(\x)^T\,\M_t(\phi)^{-1}\,\v_t(\x)\,\right]^{-1}\,,\quad\forall \x\in\R^n\,.
\end{equation}
An equivalent and variational definition is also
\begin{equation}
\label{def-christo-2}
\Lambda^\phi_t(\x)\,=\,\inf_{p\in\R[\x]_t}\{\,\int_{\bom} p^2\,d\phi\::\: p(\x)\,=\,1\,\}\, ,\quad\forall \x\in\R^n\,.\end{equation}
One interesting and distinguishing feature of the CF is that as $t$ increases, $\Lambda^\phi_t(\x)\downarrow 0$ exponentially fast for every $\x\not\in{\rm support}(\phi)$. In other words $\Lambda_t^\phi$ identifies the support of $\phi$ when $t$ is sufficiently large. In addition, at least in dimension $n=2$ or $n=3$, one may visualize this property even for small $t$, as the resulting superlevel sets 
$\bom_\gamma:=\{\,\x: \Lambda^\phi_t(\x)\geq \gamma\,\}$, $\gamma\in\R$, capture the shape of $\bom$ quite well; see e.g. \cite{neurips}.

\subsection{Setting}

Let $\Y:=\{1,2,\ldots,m\}$ be the set of $m$ classes, and for each (class) $j\in\Y$, let $\X_j\subset\R^n$ 
be the set of
points in the class $j$, assumed to be open with compact closure $\cX_j$,
Let $\X:=\bigcup_{j=1}^{m}\X_j\subset\R^n$ be the open set (with compact closure $\cX=\bigcup_{j=1}^{m}\cX_j$) of all points to be classified and 
$\mu$ be the joint probability distribution of $(\x,y)$ on $\cX\times \Y$. Write
\begin{equation}
\label{disintegration}
d\mu(\x,y)\,=\,\varphi(dy\vert\x)\,\phi(d\x)\,,\end{equation}
where $\mu$ has been disintegrated into its marginal $\phi$ on $\cX$ and 
its conditional probability distribution $\varphi(dy\vert\,\x)$ on $\Y$ given $\x\in\cX$. 
Next, each point $\x\in\X$ belongs to only one class and therefore $\X_i\cap\X_j=\emptyset$ for all pairs $(i,j)$ with $i\neq j$, and so we may and will
assume that $\phi(\cX_i\cap\cX_j)=0$ for all pairs $(i,j)$ with $i\neq j$.
Therefore one may write $\mu=\sum_{j=1}^{m}\mu_j$, with
\[d\mu_j(\x,y)\,=\,\delta_{\{j\}}(dy)\,\phi_j(d\x)\,,\quad j\in \Y\,,\]
for some marginals $\phi_j$ on $\cX_j$, $j\in \Y$. 
In particular:
\[\phi(A)\,=\,\mu(A\times\Y)\,=\,\sum_{j=1}^{m}\mu_j(A\times \Y)\,=\,\sum_{j=1}^{m}\phi_j(A)\,,\quad\forall A\in\mathcal{B}(\X)\,,\]
and therefore  $\phi=\sum_{j=1}^{m}\phi_j$.
Next, let $f:\cX\to\Y$ be the exact classifier
\begin{equation}
\label{exact-class}
\x\mapsto f(\x)\,=\,\left\{\begin{array}{l} \sum_{j=1}^{m}j\cdot 1_{\X_j}(\x)\mbox{ if $\x\in\X$,}\\ \mbox{$0$ otherwise.}\end{array}\right.
\end{equation}
So $f(\x)$ identifies the class of $\x\in\X$ and returns $f(\x)=0$ if $\x$ belong to some intersection $\cX_i\cap\cX_j$.
Notice that one may write
\begin{equation}
\label{mu-graph}
d\mu(\x,y)\,=\,\delta_{f(\x)}(dy)\,\phi(d\x)\,,\end{equation}
and so the joint distribution $\mu$ is supported on the graph $\mathbf{G}:=\{\,(\x,f(\x)): \x\in\cX\,\}$ of the function $f$.

The Christoffel function is a powerful tool from the theory of approximation and orthogonal polynomials and
one of its distinguishing features is its ability to identify the support 
of the underlying measure. So the CF $\Lambda^\mu_t$ associated with $\mu$ is an appropriate tool to approximate $f$ since the graph of $f$ is precisely the support of the measure $\mu$
in \eqref{mu-graph}. In Marx et al. \cite{constructive} the authors 
propose to approximate $f$ (when $\Vert f\Vert_\infty<M$) by:
\begin{equation}
\label{approx-constructive}
\hat{f}_t(\x)\,:=\,\arg\min_{y}\,\Lambda^{\mu+\varepsilon\mu_0}_t(\x,y)\,,\quad \forall\,\x\in\X\,,\end{equation}
with a small $\varepsilon>0$ and where $\mu_0$ is a measure with a density w.r.t. Lebesgue measure, positive on $\X\times [-M,M]$.
They prove nice theoretical convergence guarantees as $t$ increases; see \cite{constructive} for more details.
Notice that in the present supervised classification framework,  the function to approximate is a \emph{step function} so that the support of its graph is contained in a real algebraic variety, an even more specific case.

\section{Main result}
We first consider the ideal case of approximating the classifier $f$ in \eqref{exact-class} via the CF of the joint distribution $\mu$ in \eqref{mu-graph}. Then we next consider the more practical setting (as in machine learning) where we only have access to a finite sample (the training set).
In this case we use the \emph{empirical} measures $\phi_{j,N}$ associated with the points 
of the sample in class $``j"$. In section \ref{practical} we invoke
results from \cite{CD-2022} that relate the degree $t$ of the 
CF $\Lambda^{\phi_{j,N}}_t$ with  the size $N$ of the sample to ensure that important asymptotic properties of $\Lambda^{\phi_{j,N}}_t$ and $\Lambda^{\phi_j}_t$ as $t$ and $N$ increase, coincide.

\subsection{The CF on a real variety}
\label{real-variety}
Let $q\in\R[\x,y]$ be the polynomial $(\x,y)\mapsto v(\x,y)=\prod_{i=1}^m (y-i)$ and
let $\mu$ be the probability measure on $\bom=\cX\times\Y$ defined in \eqref{mu-graph}. 
Its support $\bom$ is contained in the real algebraic variety $V:=\{(\x,y): v(\x,y)=0\}=\R^n\times\Y$ and the ideal $\mathcal{I}=\langle v\,\rangle\subset \R[\x,y]$ generated by the polynomial $v$ is the ideal of polynomials that vanish on $V$. 

Observe that the moment matrix $\M_t(\mu)$ is singular since the vector $\mathbf{v}$ of coefficients of the polynomial $v$
is in the kernel of $\M_t(\mu)$ as soon as $t\geq m$. Indeed 
$\mathbf{v}^T\M_t(\mu)\mathbf{v}=\int v^2\,d\mu=0$
because the support of $\mu$ is contained in $V$. So 
the definition \eqref{def-christo-11} of the CF is not valid any more.
Denote by $L^2_t(\mu) \subset\R[\x,y]$ the space of polynomials on $V$ of total degree at most $t$
(and degree at most $m-1$ in the variable $y$), equipped with the inner product and norm inherited from $L^2(\bom,\mu)$. It turns out that $L^2_t(\mu)$ is a RKHS (Reproducing Kernel Hilbert Space). Then in this context, the variational definition \eqref{def-christo-2} of the CF associated with $\mu$ reads
\begin{equation}
\label{CF-V}
(\x,y)\mapsto \Lambda^\mu_t(\x,y)\,:=\,\inf_{p\in L^2_t(\mu)}\{\,\int_{\bom} p^2\,d\mu:\: p(\x,y)\,=\,1\,\}\,,\quad\forall (\x,y)\in\,V\,.\end{equation}
The set $\Gamma_t:=\{\,\x^{\balpha}\,y^k:\:k\leq m-1\,;\: \vert\balpha\vert+k\leq t\}\subset \R[\x,y]$
is a monomial basis of $L^2_t(\mu)$. 
Let $\M'_t(\mu)$ be the moment matrix associated with $\mu$ in \eqref{mu-graph} with rows and columns indexed by all monomials $(\x^{\balpha}\,y^k)$ of $\Gamma_t$ (and not all monomials $\x^{\balpha}\,y^k$ of total degree at most $t$), e.g.,
 listed according to the lexicographic ordering. Then
$\M'_t(\mu)$ is non singular and 
\[\Lambda^\mu_t(\x,y)\,=\,\v'_t(\x,y)^T\M'_t(\mu)^{-1}\,\v'_t(\x,y)\,,\quad\forall (\x,y)\,\in\,V\,,\]
where $\v'_t(\x,y)$ is the vector of all monomials of $\Gamma_t$. Alternatively
\[\Lambda^\mu_t(\x,y)\,=\,\sum_{(\balpha,k)\in\Gamma_t} \frac{1}{\lambda_{\balpha,k}}\,Q_{\balpha,k}(\x,y)^2\,,\]
where the $Q_{\balpha,k}$'s and the $\lambda_{\balpha,k}$'s are the eigenvectors and their respective eigenvalues 
associated with $\M'_t(\mu)$. Following \cite{constructive}, one may consider the perturbed measure $\mu+\varepsilon\mu_0$ 
where $\mu_0$ is a probability uniformly distributed on $\cX\times [0,m]$ and $\varepsilon>0$ is a small parameter.
Then 
\[\Lambda_t^{\mu+\varepsilon\mu_0}(\x,y)\,=\,\v_t(\x,y)^T\M_t(\mu+\varepsilon\mu_0)^{-1}\,\v_t(\x,y)\,,\quad\forall (\x,y)\,\in\,\R^n\times\R\,.\]
Recall that $\v_t(\x,y)$ is the vector of \emph{all} monomials $\x^{\balpha}\,y^k$ of degree at most $t$, and $\M_t(\mu+\varepsilon\mu_0)$ is the moment matrix of $\mu+\varepsilon\mu_0$ of order $t$, which is non singular for all $\varepsilon>0$. In addition $\Lambda^{\mu+\varepsilon\mu_0}_t$ is defined for \emph{all} $(\x,y)\in\R^{n+1}$
whereas $\Lambda^\mu_t$ in \eqref{CF-V} is defined for all $(\x,y)\in V$. Then as proved in \cite{constructive}
the classifier $\hat{f}_t$ in \eqref{approx-constructive} approximates $f$ as $t$ increases. Next we show that by 
a slightly change of the vector space $L^2_t(\mu)$ in \eqref{CF-V}, the resulting CF has nice additional properties
that can be exploited to provide the resulting classifier \eqref{approx-constructive} with a clear and more intuitive interpretation.

\paragraph{A slight variant of the CF}

We now introduce a slight variant $\hat{\Lambda}^\mu_t$ of the CF $\Lambda^\mu_t$, defined by:
\begin{equation}
\label{CF-V-variant}
(\x,y)\mapsto \hat{\Lambda}^\mu_t(\x,y)\,:=\,\inf_{p\in \L^2_t(\mu)}\{\,\int_{\bom} p^2\,d\mu:\: p(\x,y)\,=\,1\,\}\,,\quad\forall (\x,y)\in\,V\,,\end{equation}
where $\L^2_t(\mu)=:\R[\x,y]_{t,m-1}$ is the vector space of polynomials of degree at most $t$ with respect to the variable $\x$ and at most $m-1$ with respect to the variable $y$, so that $L^2_t(\mu)\subset\,\L^2_t(\mu)\subset L^2_{t+m-1}(\mu)$. 
\begin{proposition}
\label{prop-1}
Let $\Lambda^\mu_t$ and $\hat{\Lambda}^\mu_t$ be as in \eqref{CF-V} and \eqref{CF-V-variant} respectively. Then:
\begin{equation}
\label{prop-1-1}
\Lambda^\mu_{t+m-1}(\x,y)\,\leq\,\hat{\Lambda}^\mu_{t}(\x,y)\,\leq\,\Lambda^\mu_{t}(\x,y)\,,\quad\forall (\x,y)\,\in\,V\,.\end{equation}
\end{proposition}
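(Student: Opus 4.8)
The plan is to derive both inequalities purely from the variational characterizations \eqref{CF-V} and \eqref{CF-V-variant}, together with the nestedness of the underlying polynomial spaces, using the elementary fact that an infimum over a larger feasible set is no larger than the infimum over a smaller one. Fix $(\x,y)\in V$ throughout. The three quantities $\Lambda^\mu_{t+m-1}(\x,y)$, $\hat{\Lambda}^\mu_t(\x,y)$ and $\Lambda^\mu_t(\x,y)$ are all infima of the \emph{same} objective $p\mapsto\int_{\bom}p^2\,d\mu$ subject to the \emph{same} affine constraint $p(\x,y)=1$, the only difference being the polynomial space over which $p$ ranges: respectively $L^2_{t+m-1}(\mu)$, $\L^2_t(\mu)$ and $L^2_t(\mu)$.

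First I would record the monomial description of each space and verify the chain $L^2_t(\mu)\subset\L^2_t(\mu)\subset L^2_{t+m-1}(\mu)$ asserted in the text. Describing each space through the monomials $\x^{\balpha}y^k$ it contains: $L^2_t(\mu)$ consists of those with $k\le m-1$ and $|\balpha|+k\le t$; the variant $\L^2_t(\mu)=\R[\x,y]_{t,m-1}$ of those with $k\le m-1$ and $|\balpha|\le t$; and $L^2_{t+m-1}(\mu)$ of those with $k\le m-1$ and $|\balpha|+k\le t+m-1$. The first inclusion is immediate since $|\balpha|+k\le t$ forces $|\balpha|\le t$; the second follows because $|\balpha|\le t$ and $k\le m-1$ together give $|\balpha|+k\le t+m-1$. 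This is the only place where the specific shift by $m-1$ enters, and it is precisely the slack needed to absorb the maximal admissible $y$-degree.

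Next I would invoke monotonicity of the infimum. For the right-hand inequality, since $L^2_t(\mu)\subset\L^2_t(\mu)$, every competitor $p$ feasible for $\Lambda^\mu_t(\x,y)$ is also feasible for $\hat{\Lambda}^\mu_t(\x,y)$ with the identical value $\int_{\bom}p^2\,d\mu$; hence the infimum defining $\hat{\Lambda}^\mu_t(\x,y)$ runs over a superset of competitors and can only be smaller, giving $\hat{\Lambda}^\mu_t(\x,y)\le\Lambda^\mu_t(\x,y)$. The left-hand inequality is obtained in exactly the same way from $\L^2_t(\mu)\subset L^2_{t+m-1}(\mu)$, yielding $\Lambda^\mu_{t+m-1}(\x,y)\le\hat{\Lambda}^\mu_t(\x,y)$. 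Chaining the two inequalities produces \eqref{prop-1-1}.

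I expect no genuine obstacle here; the single point deserving a line of care is well-posedness. Each feasible set is nonempty because the constant polynomial $p\equiv1$ lies in $L^2_t(\mu)$, hence in all three spaces, and satisfies $p(\x,y)=1$; moreover the objective $\int_{\bom}p^2\,d\mu$ is finite since $\mu$ is a probability measure supported on the compact set $\bom$ and $p$ is a polynomial, hence bounded on $\bom$. Thus all three infima are finite, finite-dimensionality of the spaces guarantees they are in fact attained, and the set-inclusion monotonicity used above is unambiguous.
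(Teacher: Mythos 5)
Your proof is correct and follows essentially the same route as the paper's: both arguments rest on the inclusions $L^2_t(\mu)\subset\L^2_t(\mu)\subset L^2_{t+m-1}(\mu)$ together with the monotonicity of the infimum in \eqref{CF-V} and \eqref{CF-V-variant} under enlargement of the feasible set. Your added verifications (the monomial-level check of the inclusions and the well-posedness of the infima) are details the paper leaves implicit, but the underlying argument is identical.
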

\begin{proof} Follows from $L^2_t(\mu)\subset\,\L^2_t(\mu)\subset L^2_{t+m-1}(\mu)$ and the definitions
\eqref{CF-V} and \eqref{CF-V-variant}.
\end{proof}
Proposition \ref{prop-1} states that $\hat{\Lambda}^\mu_t$ and $\Lambda^\mu_t$ are close but as we next see, $\hat{\Lambda}^\mu_t$ has an interesting additional feature. Namely, it has a nice characterization in closed form which when exploited for classification
leads to a classifier with a clear interpretation.
Let $(\theta_j)_{j\in [m]}\subset\R[y]_{m-1}$ be the interpolation polynomials at the points $\{1,2,\ldots,m\}$ of $\Y$, i.e.,
\[y\mapsto \theta_j(y)\,:=\,\frac{\prod_{i\neq j}(y-i)}{\prod_{i\neq j}(j-i)}\,,\quad i=1,\ldots,m\,,\]
which form an orthonormal family with respect to the uniform probability measure on $\Y$.

\begin{theorem}
\label{theo-1}
For each $j\in\Y$, let $(P^j_{\balpha})_{\balpha\in\N}\subset\R[\x]$ be a family of polynomials
that are orthonormal with respect to the marginal probability measure $\phi_j$ of $\mu_j$, and
let $\Lambda^{\phi_j}_t$ be the standard Christoffel function associated with $\phi_j$ on $\cX_j$.
Then :

(i) The family $(\theta_j(y)\,P^j_{\balpha}(\x))_{\balpha\in\N^n_t}\subset\R[\x,y]$ 
is an orthonormal basis of $\L^2_t(\mu)$.

(ii) The Christoffel function $\hat{\Lambda}^\mu_t$ defined in \eqref{CF-V-variant} satisfies
\begin{eqnarray}
\label{christo-formula-compact-multi-0}
\hat{\Lambda}^\mu_t(\x,y)^{-1}&=&
\sum_{j\in \Y}\theta_j(y)^2\,\sum_{\balpha\in\N^n_t}P^j_{\balpha}(\x)^2\,,\quad\forall (\x,y)\in\R^n\times\Y\\
\label{christo-formula-compact-multi-1}
&=&\sum_{j\in\Y}\theta_j(y)^2\,\Lambda^{\phi_j}_t(\x)^{-1}\,,\quad\forall (\x,y)\in\R^n\times\Y\,\\
\label{christo-formula-compact-multi-2}
&=&\sum_{j\in \Y}\delta_{y=j}\,\Lambda^{\phi_j}_t(\x)^{-1}\,,\quad\forall (\x,y)\in\R^n\times\Y\,.
\end{eqnarray}
\end{theorem}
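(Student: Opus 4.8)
The plan is to establish (i) first and then obtain (ii) as an essentially immediate consequence of the standard reproducing-kernel formula for the Christoffel function.

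For (i) I would verify orthonormality and spanning separately. For orthonormality, I would compute, for index pairs $(j,\balpha)$ and $(k,\bbeta)$, the inner product $\int_\bom \theta_j(y)\,P^j_\balpha(\x)\,\theta_k(y)\,P^k_\bbeta(\x)\,d\mu$. Using the decomposition $\mu=\sum_{l}\mu_l$ with $d\mu_l(\x,y)=\delta_{\{l\}}(dy)\,\phi_l(d\x)$, this collapses to $\sum_l \theta_j(l)\,\theta_k(l)\int P^j_\balpha\,P^k_\bbeta\,d\phi_l$. The decisive point is the nodal property $\theta_j(l)=\delta_{j=l}$ of the interpolation polynomials, which forces only the term $l=j=k$ to survive; orthonormality of $(P^j_\balpha)_\balpha$ with respect to $\phi_j$ then gives $\delta_{j=k}\,\delta_{\balpha=\bbeta}$. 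Note that it is this nodal identity, rather than the orthonormality of the $\theta_j$ on $\Y$, that does the work, since the block structure of $\mu$ already decouples the classes. For spanning, I would use Lagrange interpolation in the $y$-variable: any $p\in\L^2_t(\mu)=\R[\x,y]_{t,m-1}$ satisfies $p(\x,y)=\sum_{j=1}^m \theta_j(y)\,p(\x,j)$ as polynomials of degree at most $m-1$ in $y$ (both sides agree at $y=1,\dots,m$), and each slice $p(\cdot,j)$ lies in $\R[\x]_t=\mathrm{span}\{P^j_\balpha:\balpha\in\N^n_t\}$. Expanding each slice in $(P^j_\balpha)_\balpha$ exhibits $p$ as a combination of the $\theta_j\,P^j_\balpha$. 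As the orthonormal family has exactly $m\cdot s(t)$ elements, matching $\dim\R[\x,y]_{t,m-1}$, it is an orthonormal basis.

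For (ii) I would invoke the equivalence between the sum-of-squares form \eqref{def-christo-1} and the variational form \eqref{def-christo-2}, now applied to the finite-dimensional space $\L^2_t(\mu)$ (on which point evaluation is a continuous linear functional). Writing a competitor $p=\sum_{j,\balpha}c_{j,\balpha}\,\theta_j\,P^j_\balpha$, orthonormality gives $\int p^2\,d\mu=\sum_{j,\balpha} c_{j,\balpha}^2$, and minimizing this subject to the single linear constraint $p(\x,y)=1$ yields, by Cauchy--Schwarz, $\hat\Lambda^\mu_t(\x,y)^{-1}=\sum_{j,\balpha}\big(\theta_j(y)\,P^j_\balpha(\x)\big)^2$. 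Factoring $\theta_j(y)^2$ out of the inner sum gives \eqref{christo-formula-compact-multi-0}; substituting $\sum_\balpha P^j_\balpha(\x)^2=\Lambda^{\phi_j}_t(\x)^{-1}$ from \eqref{def-christo-1} gives \eqref{christo-formula-compact-multi-1}; and since $y\in\Y$ forces $\theta_j(y)=\delta_{y=j}\in\{0,1\}$ and hence $\theta_j(y)^2=\delta_{y=j}$, one obtains \eqref{christo-formula-compact-multi-2}.

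I do not expect a genuine obstacle: the computation is routine once the product basis is guessed. The only step requiring care is the spanning/dimension argument — one must confirm that the $\theta_j\,P^j_\balpha$ not merely form an orthonormal set but actually exhaust $\L^2_t(\mu)$, which is where the Lagrange identity in $y$ and the count $m\cdot s(t)$ enter. A secondary point to state cleanly is that $\R[\x,y]_{t,m-1}$ injects into $L^2(\mu)$, so that this dimension count is legitimate; this holds because each $\phi_j$ has full-dimensional support and $\Y$ consists of $m$ distinct points.
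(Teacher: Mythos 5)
Your proof is correct, and its core --- exhibiting the product family $(\theta_j(y)\,P^j_{\balpha}(\x))$ as an orthonormal basis of $\L^2_t(\mu)$ and then reading off the Christoffel function as the reciprocal sum of squares --- is the same as the paper's. The differences are local but real. For the spanning step in (i), the paper expands a product $p(y)\,q(\x)$ separately in the bases $(\theta_j)_j$ and $(P^j_{\balpha})_{\balpha}$ and concludes by linearity, whereas you use the Lagrange identity $p(\x,y)=\sum_{j}\theta_j(y)\,p(\x,j)$, which handles an arbitrary element of $\R[\x,y]_{t,m-1}$ in one stroke; your version is cleaner. For (ii), the paper works with the moment matrix $\tM_t(\mu)$: it poses \eqref{CF-V-variant} as a quadratic program, solves the stationarity condition $2\,\tM_t(\mu)\,\p^*=\lambda^*\,\tv_t(\x,y)$, and evaluates the resulting kernel in the orthonormal basis, obtaining along the way the closed form \eqref{alternative}; you instead minimize $\sum_{j,\balpha}c_{j,\balpha}^2$ subject to the single linear constraint directly by Cauchy--Schwarz in orthonormal coordinates. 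Both are standard derivations of the equivalence between \eqref{def-christo-1} and \eqref{def-christo-2}; yours is more elementary and avoids inverting $\tM_t(\mu)$, while the paper's route has the side benefit of producing \eqref{alternative}, which is reused after Theorem \ref{theo-2} to compute $\hat{\Lambda}^\mu_t$ from the joint moment matrix in practice. Two of your side remarks strengthen the argument: the observation that the nodal identity $\theta_j(l)=\delta_{j=l}$ (not orthonormality of the $\theta_j$ on $\Y$) is what makes the cross terms vanish, and the check that $\R[\x,y]_{t,m-1}$ injects into $L^2(\mu)$ (slices vanishing on the full-dimensional supports force $p(\cdot,j)=0$, and vanishing at the $m$ points of $\Y$ then forces $p=0$), which is exactly the nondegeneracy guaranteeing $\tM_t(\mu)\succ0$ that the paper uses implicitly. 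The only microscopic point you leave tacit is that the Cauchy--Schwarz step needs the constraint vector $(\theta_j(y)P^j_{\balpha}(\x))_{j,\balpha}$ to be nonzero for every $(\x,y)\in\R^n\times\Y$; this holds because for $y=j$ the block $(P^j_{\balpha}(\x))_{\balpha}$ cannot vanish identically (constants lie in its span), so it is not a gap.
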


 \begin{proof}
 (i)  Every element of $\L^2_t(\mu)=\R[\x,y]_{t,m-1}$ is of the form
 $\sum_{j=0}^{m-1}y^j\,q_j(\x)$ with $q_j\in\R[\x]_t$. Hence consider a polynomial $u\in\L^2_t(\mu)$ in the form
 $u(\x,y):=p(y)\,q(\x)$ 
 for some $q\in\R[\x]_t$ and some $p\in\R[y]_{m-1}$, arbitrary. 
 Then as $(P^j_{\balpha})_{\balpha\in\N^n_t}$ generates $\R[\x]_t$, observe that 
 for every $j\in \Y$: 
 \[ q(\x)\,=\,\sum_{\balpha\in\N^n_t}q^j_{\balpha}\,P^j_{\balpha}(\x)\,,\quad\forall \x\in\R^n\,,\]
  for some coefficients $(q^j_{\balpha})_{\balpha\in\N^n_t}$.
 Next, as the polynomials $(\theta_j)_{j\in \Y}$ generate $\R[y]_{m-1}$, write
$p(y)=\sum_{j\in\Y} p_j\,\theta_j(y)$  for some coefficients $(p_j)_{j\in \Y}$, and therefore
 \[ u(\x,y)\,=\,\sum_{j\in \Y} p_j\,(\theta_j(y)\,q(\x))\,=\,\sum_{\balpha\in\N^n_t,\,j\in \Y} p_j\,q^j_{\balpha}\,\left[\theta_j(y)\,P^j_{\balpha}(\x)\,\right]\,.\]
 \emph{Orthogonality.}
 If $i\neq j$ then $\theta_i(y)\theta_j(y)=0$ everywhere on the support of $\mu$ and therefore
 \[ \int_{\bom}\theta_i(y)\,P^i _{\balpha}(\x)\,\theta_j(y)\,P^j_{\bbeta}(\x)\,d\mu(\x,y)\,=\,0\,,\]
whereas if $i=j$ then
\begin{eqnarray*}
 \int_{\bom}\theta_i(y)^2\,P^i _{\balpha}(\x)\,P^i_{\bbeta}(\x)\,d\mu(\x,y)&=&
\sum_{j=1}^{m}\int_{\bom}\theta_i(y)^2\,P^i _{\balpha}(\x)\,P^i_{\bbeta}(\x)\,d\mu_j(\x,y)\\
&=&\int_{\bom}\theta_i(y)^2\,P^i _{\balpha}(\x)\,P^i_{\bbeta}(\x)\,d\mu_i(\x,y)\,=\,
\int_{\bom}P^i _{\balpha}(\x)\,P^i_{\bbeta}(\x)\,d\phi_i(\x)\,=\,\delta_{\balpha=\bbeta}\,.
\end{eqnarray*}
 Next, as $\L^2_t(\mu)\subset\R[\x,y]_{t,m-1}$,   its cardinality is
 $r(t):=m\cdot {n+t\choose t}$ which is also  the number of terms in the family $(\theta_j(y)\,P^j_{\balpha}(\x))_{\balpha\in\N^n_t,j\in \Y}$
 which also generates $\L^2_t(\mu)$. 
 Hence $(\theta_j(y)\,P^j_{\balpha}(\x))_{\balpha\in\N^n_t,j\in\Y}$ is an orthonormal basis of $\L^2_t(\mu)$. 
 
  (ii) Let $\tM_t(\mu)$ be the moment matrix of degree $t$ with rows and columns indexed by
  monomials $(\x^{\balpha}\,y^j)_{\balpha\in\N^n_t,0\leq j\leq m-1}$ (e.g. with lexicographic ordering), and let
  $\tv_t(\x,y)$ be the vector of monomials $\x^{\balpha}\,y^k$ listed with the same ordering.
 Observe that \eqref{CF-V-variant} reads
 \begin{equation}
 \label{pb-convex}
 \hat{\Lambda}^\mu_t(\x,y)\,=\,\min_\p\,\{\,\p^T\tM_t(\mu)\,\p:\: \langle \p,\tv_t(\x,y)\rangle\,=\,1\,\}\,,\end{equation}
 where $\p\in\R^{r(t)}$ is the vector of coefficients of $p\in\L^2_t(\mu)$ in that basis. Then 
 \eqref{pb-convex} is a convex optimization problem whose optimal solution $\p^*$ satisfies
 $2\,\hat{\M}_t(\mu)\,\p^*=\lambda^*\,\tv_t(\x,y)$
 for some scalar $\lambda^*$. Hence $\lambda^*=2\hat{\Lambda}^\mu_t(\x,y)$ and 
 \[\p^*\,=\,\hat{\Lambda}^\mu_t(\x,y)\,\tM_t(\mu)^{-1}\,\tv_t(\x,y)\,\]
 so that the corresponding polynomial $p^*\in\L^2_t(\mu)$ reads
 \begin{eqnarray*}
 p^*(\u,z)&=&\hat{\Lambda}^\mu_t(\x,y)\,
 \tv_t(\u,z)^T\tM_t(\mu)^{-1}\,\tv_t(\x,y)\\
 &=&\hat{\Lambda}^\mu_t(\x,y)\,
 \sum_{\balpha\in\N^n_t,j\in \Y}\theta_j(y)\,\theta_j(z)\,P^j _{\balpha}(\u)\,P^j _{\balpha}(\x)\,,\quad (\u,z)\in\R^n\times \Y\,,\end{eqnarray*}
  and therefore
  \[1\,=\,p^*(\x,y)\,=\,\hat{\Lambda}^\mu_t(\x,y)\,\sum_{\balpha\in\N^n_t,j\in \Y}\theta_j(y)^2\,P^j _{\balpha}(\x)^2\,,\quad \forall (\x,y)\in\R^n\times \Y\,,\]
  which is \eqref{christo-formula-compact-multi-0}. In particular we also retrieve that
  \begin{equation}
  \label{alternative}
  \hat{\Lambda}^\mu_t(\x,y)^{-1}\,=\,\hat{\v}_t(\x,y)^T\,\hat{\M}_t(\mu)^{-1}\,\hat{\v}_t(\x,y)\,,\quad \forall (\x,y)\,\in\,V\,.
  \end{equation}
  Next \eqref{christo-formula-compact-multi-1} follows from
  the definition of the Christoffel function associated with $\phi_j$ for each $j\in\Y$, and \eqref{christo-formula-compact-multi-2} follows from   the properties of interpolation polynomials $(\theta_j)_{j\in\Y}$.
 \end{proof}
So whenever $y\in\Y$, the Christoffel function 
$\hat{\Lambda}^\mu_t(\x,y)$ has a very simple expression \eqref{christo-formula-compact-multi-1}, stated directly in terms of the 
Christoffel functions $(\Lambda^{\phi_j}_t(\x))_{j=1,\ldots,m}$ associated with the classes $j=1,\ldots,m$. This is quite natural but is proper to 
the CF $\hat{\Lambda}^\mu_t$ and not to the standard CF $\Lambda^\mu_t$.\\

\paragraph{An ideal classifier}
Given the Christoffel function $\hat{\Lambda}^\mu_t$ defined in \eqref{christo-formula-compact-multi-1} and 
inspired by \eqref{approx-constructive}, a natural candidate classifier is the function
\begin{equation}
\label{ideal-classifier}
\x\mapsto \hat{f}_t(\x)\,:=\,\arg\min_{y\in\Y}\,\hat{\Lambda}^\mu_t(\x,y)^{-1}\,=\,\arg\max_{y\in\Y}\,\hat{\Lambda}^\mu_t(\x,y),\quad\forall \x\in \X\,,
\end{equation}
which in view of \eqref{christo-formula-compact-multi-2} reads:
\begin{equation}
\label{ideal-classifier-1}
\x\mapsto \hat{f}_t(\x)\,:=\,\arg\max_{k\in [m]}\,\Lambda^{\phi_k}_t(\x)\,,\quad\forall \x\in \X\,.
\end{equation}
Observe that the ``$\max$" in \eqref{ideal-classifier-1} is over $y\in [m]$ and not over the interval  $[0,m]$. This is because
in supervised classification, we know that $f(\x)\in [m]$ for all $\x\in\X$.
The rationale is the following: Let $\x\in\X_j$ be fixed arbitrary, so that $\x\not\in\cX_k$ for every $k\neq j$. 
As $t$ increases, $\Lambda_t^{\phi_k}(\x)$ decreases to zero
exponentially while $\Lambda^{\phi_j}_t(\x)$ decreases not faster than $t^n$. Therefore for $t$ sufficiently large,
necessarily $\Lambda^{\phi_k}_t(\x)<\Lambda^{\phi_j}_t(\x)$ for all $k\neq j$, and so by \eqref{christo-formula-compact-multi-2}, 
\[\x\in\X_j\Rightarrow\exists t_0\mbox{ s.t. }\hat{f}_t(\x)\,=\,\arg\min_{y\in\Y}\,\hat{\Lambda}^\mu_t(\x,y)^{-1}
\,=\,\arg\max_{k}\,\Lambda^{\phi_k}_t(\x)\,=\,j\,,\quad\forall t\geq t_0\,.\]
An even stronger  \emph{almost-uniform} result holds. Let $\partial \X_j$ denote the boundary 
of the set $\X_j\subset\R^n$.
\begin{theorem}
\label{th-main}
Let $\hat{f}$ be as in \eqref{ideal-classifier-1}
and  let $\X_j^\varepsilon:=\{\,\x\in\X_j: d(\x,\partial \X_j)>\varepsilon\,\}$ where $\varepsilon>0$ is fixed. 
Assume that for every  $j\in [m]$, $\phi_j$ has a density w.r.t. Lebesgue measure $\lambda$ restricted to $\X_j$, bounded from below by $c>0$.
Then there exists $t_\varepsilon$ such that $\hat{f}_t(\x)=j$ for all $\x\in\X_j^\varepsilon$ and all $t\geq t_\varepsilon$.
\end{theorem}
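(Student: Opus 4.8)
The plan is to prove the stronger uniform statement by comparing, uniformly over $\x\in\X_j^\varepsilon$, the ``correct'' score $\Lambda^{\phi_j}_t(\x)$ against the competing scores $\Lambda^{\phi_k}_t(\x)$, $k\neq j$. The mechanism is the one described informally after \eqref{ideal-classifier-1}: $\Lambda^{\phi_j}_t(\x)$ decays only polynomially in $t$, whereas each $\Lambda^{\phi_k}_t(\x)$ decays exponentially; to make this work uniformly in $\x$ I must produce both rates with constants independent of $\x\in\X_j^\varepsilon$. First I would record the geometric fact that $\X_j^\varepsilon$ is uniformly separated from the other supports. If $\x\in\X_j^\varepsilon$, then $d(\x,\partial\X_j)>\varepsilon$ forces the open ball $B(\x,\varepsilon)$ to lie inside $\X_j$ (by connectedness of the ball, any exit point would be a boundary point within distance $\varepsilon$). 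Since $\X_j\cap\X_k=\emptyset$ and boundary points of $\X_k$ cannot lie in the open set $\X_j$ (a neighbourhood argument), one gets $B(\x,\varepsilon)\cap\cX_k=\emptyset$, i.e.\ $d(\x,\cX_k)\geq\varepsilon$ for every $k\neq j$, uniformly over $\X_j^\varepsilon$.

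Next I would establish the uniform polynomial lower bound on the correct score. Using the variational form \eqref{def-christo-2} and the hypothesis that $\phi_j$ has density at least $c>0$ on $\X_j\supseteq B(\x,\varepsilon)$, I restrict the integral to the ball and drop the density, so that for $p$ ranging over $\R[\x]_t$ with $p(\x)=1$,
\[\Lambda^{\phi_j}_t(\x)\,=\,\inf_{p(\x)=1}\int p^2\,d\phi_j\;\geq\;c\,\inf_{p(\x)=1}\int_{B(\x,\varepsilon)}p^2\,d\lambda\;=\;c\,\Lambda^{\lambda_{B(\x,\varepsilon)}}_t(\x),\]
where $\lambda_{B(\x,\varepsilon)}$ is Lebesgue measure on the ball. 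By translation invariance, the Christoffel function of Lebesgue measure on $B(\x,\varepsilon)$ at its center is independent of $\x$ and equals a fixed quantity $L_t(\varepsilon):=\Lambda^{\lambda_{B(0,\varepsilon)}}_t(0)$, which decays only polynomially, of order $t^{-n}$. Hence $\Lambda^{\phi_j}_t(\x)\geq c\,L_t(\varepsilon)$ uniformly over $\x\in\X_j^\varepsilon$, a bound that is subexponential in $t$.

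The remaining and hardest step is the matching uniform exponential upper bound $\sup_{\x\in\X_j^\varepsilon}\Lambda^{\phi_k}_t(\x)\leq C\,\tau^t$ with $\tau=\tau(\varepsilon)<1$, for each $k\neq j$. Here I again use \eqref{def-christo-2}, now as an upper bound: any test polynomial $p\in\R[\x]_t$ with $p(\x)=1$ gives $\Lambda^{\phi_k}_t(\x)\leq\int p^2\,d\phi_k\leq\sup_{\cX_k}p^2$, since $\phi_k$ is a probability measure. It therefore suffices to exhibit, for each $\x$ with $d(\x,\cX_k)\geq\varepsilon$, a degree-$t$ polynomial equal to $1$ at $\x$ and exponentially small on $\cX_k$. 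This is the content of the Bernstein--Walsh inequality $\sup_{\cX_k}|p|\geq|p(\x)|\,e^{-t\,V_{\cX_k}(\x)}$, where $V_{\cX_k}$ is the Siciak--Zakharyuta extremal function of $\cX_k$, together with the fact that the extremal polynomials realize this rate up to subexponential factors. The crux is to bound $V_{\cX_k}$ below by some $\eta(\varepsilon)>0$ \emph{uniformly} on $\{\x:d(\x,\cX_k)\geq\varepsilon\}$; granting this, $\Lambda^{\phi_k}_t(\x)\leq C\,e^{-2\,\eta(\varepsilon)\,t}$ uniformly. I expect this uniform exponential estimate to be the main obstacle, and I would either invoke it from the quantitative support-identification results already cited (\cite{constructive,CD-2022,neurips}) or prove it directly by a uniform ``needle polynomial'' construction (e.g.\ rescaled Chebyshev polynomials along a segment joining $\x$ to $\cX_k$, using a fixed ball containing $\cX$), which yields an $\x$-independent rate depending only on $\varepsilon$.

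Finally I would combine the two bounds. For $\x\in\X_j^\varepsilon$ the inequalities $\Lambda^{\phi_j}_t(\x)\geq c\,L_t(\varepsilon)$ and $\Lambda^{\phi_k}_t(\x)\leq C\,e^{-2\eta(\varepsilon)t}$, valid for each of the finitely many $k\neq j$, give the strict inequality $\Lambda^{\phi_j}_t(\x)>\max_{k\neq j}\Lambda^{\phi_k}_t(\x)$ as soon as $t$ is large enough that $c\,L_t(\varepsilon)>C\,e^{-2\eta(\varepsilon)t}$. Since this threshold depends only on $\varepsilon$ (through $c$, $L_t$, $C$, $\eta$) and not on $\x$, setting $t_\varepsilon$ to be the threshold yields $\hat{f}_t(\x)=\arg\max_{k}\Lambda^{\phi_k}_t(\x)=j$ for all $\x\in\X_j^\varepsilon$ and all $t\geq t_\varepsilon$, as claimed.
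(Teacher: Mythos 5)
Your proposal is correct and follows essentially the same route as the paper: the paper's proof is exactly this two-sided uniform comparison over $\X_j^\varepsilon$, invoking \cite[Lemma 6.2]{adv-comp} for the polynomial-rate lower bound on $\Lambda^{\phi_j}_t$ (your Lebesgue-ball comparison) and \cite[Lemma 6.6]{adv-comp} for the uniform exponential decay of $\Lambda^{\phi_k}_t$, $k\neq j$, at points $\varepsilon$-separated from $\cX_k$ (your Bernstein--Walsh/needle-polynomial step), and then concludes by comparing the two rates. The only difference is that you re-derive these two estimates from first principles---the uniform needle-polynomial construction you give as a fallback is precisely how the cited lemma is proved, and it is the right way to get the $\x$-independent constants that the pointwise extremal-function argument alone does not immediately provide---whereas the paper simply cites them.
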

\begin{proof}
Let $s(t):={n+t\choose t}$ and denote by $\mathrm{diam}(S)$ the diameter of a bounded set $S\subset\R^n$. 
Let $\gamma_k:=\phi_k(\X_k)$, $k\in [m]$.
If $\x\in \X^\varepsilon_j$ then $d(\x,\cX_k)>\varepsilon$ for all $k\neq j$. Hence by 
\cite[Lemma 6.6]{adv-comp} (and using that $\Lambda_t^{\phi_k/\gamma_k}=\Lambda_t^{\phi_k}/\gamma_k$),
\[\frac{\gamma_k}{s(t)}\,\Lambda^{\phi_k}_t(\x)^{-1}\,\geq\,2^{\frac{t\,\varepsilon}{\varepsilon+{\rm diam}(\X_k)}-3}\,t^{-n}\,(\frac{n}{e})^n\,\exp(-n^2/t)\,,\quad\forall \x\in\X_j^\varepsilon\,.\]
On the other hand, as $d(\x,\partial \X_j)>\varepsilon$, we can invoke
\cite[Lemma 6.2]{adv-comp} extended to measures with density w.r.t. Lebesgue
bounded from below by $c>0$ (see 
\cite[Assumption 3.11]{adv-comp}) and use
$(t+1)(t+2)(t+3)/((n+t+1)(n+t+2)(n+2t+6))\geq 1/2(n+1)^3$, to obtain
\[\frac{\gamma_j}{s(t)}\,\Lambda^{\phi_j}_t(\x)^{-1}\,\leq\,\frac{2\,\lambda(\X_j)}{c\,\varepsilon^n \omega_n}\,(1+n)^3\,,\quad\forall \x\in\,\X^\varepsilon_j\,,\]
where $\omega_n$ is the $n$-dimensional area of $\mathbb{S}^{n+1}$. Hence clearly there exists $t_\varepsilon$ such that
$\Lambda^{\phi_k}_t(\x)^{-1}>\Lambda^{\phi_j}_t(\x)^{-1}$ for all $k\neq j$ and all $\x\in\X^\varepsilon_j$,
whenever $t\geq t_\varepsilon$. In particular, as the functions $\Lambda^{\phi_k}_t$ are strictly positive, continuous and $\overline{\X}_j$ is compact, there exists $a_j>0$ such that
\begin{equation}
\label{aux-1}
 \forall \x\in\X^{\varepsilon}_j\,:\quad
 \Lambda^{\phi_k}_t(\x)\,<\,\Lambda^{\phi_j}_t(\x)- a_j\,,\quad\forall k\neq j\,.
\end{equation}
Therefore the result follows from \eqref{christo-formula-compact-multi-2} and \eqref{ideal-classifier}.
\end{proof}

\subsection{Application to supervised classification with noiseless deterministic labels}
\label{practical}
In supervised classification we do not have access to the CF $\Lambda^\mu_t$ or $\hat{\Lambda}^\mu_t$.
We only have access to a sample of $N$ points ${\rm Tr}_N=\{\,(\x(i),y(i)): i=1,\ldots,N\}\subset\X$ (the training data set) and a sample of test points (the test data set).  For instance,
in a typical Machine Learning (ML) approach one tries to \emph{learn} a classifier function 
$f$ in \eqref{exact-class} from the supervised data ${\rm Tr}_N$ by computing parameters of a deep neural network
that minimize some loss function. Usually, the number of parameters is very large (compared to $N$) 
making the resulting solution sensitive to a classical \emph{overfitting} phenomenon. One way to attenuate  this 
overfitting phenomenon is to add an appropriate regularization term to the loss function in the criterion to minimize.

One reason behind this overfitting phenomenon is that in minimizing the loss function,
each data point $(\x(i),y(i))$ is treated \emph{separately}. Ideally one should somehow consider the entire \emph{training} set ${\rm Tr}_N$ itself and not its members separately. This is precisely what the CF function approach does. Indeed the training set 
${\rm Tr}_N$ is used to construct the empirical (discrete) analogues $\phi_{j,N}$ of the measures $\phi_j$,
 and their associated empirical Christoffel function $\Lambda^{\phi_{j,N}}_t$, now obtained from empirical moments. Remarkably, even though the geometry of the support of $\phi_{j,N}$ is quite trivial, the CF $\Lambda^{\phi_{j,N}}_t$
is still close to $\Lambda^{\phi_j}_t$ in a certain sense and the training set ${\rm Tr}_N$
can still be used to infer properties of the underlying measures $\phi_j$.
Hence, and importantly, even though the mathematical object $\Lambda^{\phi_{j,N}}_t$ 
is built from \emph{individual} items, it is in fact concerned with the \emph{cloud} of points of $\mathrm{Tr}_N$ in class $\{j\}$, rather than the points $\x(i)$ in that class
taken separately.
However of course,  for $\Lambda^{\phi_{j,N}}_t$ to recover asymptotic properties of $\Lambda^{\phi_{j}}_t$, the sample size $N$ and the degree $t$ cannot be chosen independently; see \cite{CD-2022,adv-comp} for more details.

\paragraph{Setting}

For every $j\in [m]$, let $\mathrm{Tr}^j_N=(\x(i))_{i\leq N}\subset\X_j$ be a training set for class $\{j\}$,
where $\x(i)$ are i.i.d. random vectors with common distribution $\phi_j$ whose support is $\X_j$.
So the whole training set $\mathrm{Tr}_N$ has a total of $mN$ points
where the $N$ points in each class $j$ are sampled from $\phi_j$.
For every fixed $t$, 
a natural approach suggested by \eqref{ideal-classifier-1}, 
consists of:
\begin{itemize}
\item Computing the Christoffel function $\Lambda^{\phi_{j,N}}_t$ associated with the empirical prob. measure
\begin{equation}
\label{empirical-j}
\phi_{j,N}\,:=\,\frac{1}{N}\sum_{\x(i)\in \X_j}\delta_{\{\x(i)\}}\,,\quad\forall j\,\in\,[m]\,.\end{equation}
Following \eqref{def-christo-11}, $\Lambda^{\phi_{j,N}}_t(\x)=\v_t(\x)^T\M_t(\phi_{j,N})^{-1}\,\v_t(\x)$,
for all $\x\in\R^n$ and all $j\in [m]$. The moments of $\phi_{j,N}$ are easily obtained by
\[\phi_{j,N}(\balpha)\,:=\,\{\,\frac{1}{N}\,\sum_{i}\x(i)^{\balpha}\,:\:\x(i)\in\mathrm{Tr}^j_N\,\}\,,\quad\forall \balpha\in\N^n\,,\]
and the moment matrix $\M_t(\phi_{j,N})$ is nonsingular 
for sufficiently large $t$.
\item Following \eqref{ideal-classifier-1}, introduce the empirical classifier
\begin{equation}
\label{empirical-classifier}
\x\mapsto \hat{f}^N_t(\x)\,:=\,\arg\max_{k}\,\Lambda^{\phi_{k,N}}_t(\x)\,,\quad\forall \x\in \X\,.
\end{equation}
\end{itemize}
Then the empirical version of Theorem \ref{th-main} reads as follows:

\begin{theorem}
\label{th-final}
For every $j\in [m]$, let $(\x(i))_{i\leq N}\subset\X_j$ be i.i.d. random vectors according to a distribution $\phi_j$ whose support is $\cX_j$ and 
which satisfies the assumption in Theorem \ref{th-main}.
Let $\phi_{j,N}$ be as in \eqref{empirical-j}, and $\hat{f}^N_t$ be as in \eqref{empirical-classifier}.
Given $\varepsilon>0$ fixed, let $\X_j^\varepsilon:=\{\,\x\in\X_j: d(\x,\partial \X_j)>\varepsilon\,\}$, $j\in [m]$.
Then there exists $t_\varepsilon$ such that for all $t>t_\varepsilon$ fixed,
with probability $1$ (with respect to the random samples $\mathrm{Tr}^k_N\subset\X_k$, $k\in [m]$),
 $\hat{f}^N_{t}(\x)=j$ for all $\x\in\X_j^\varepsilon$, for sufficiently large $N$.
\end{theorem}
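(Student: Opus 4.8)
The plan is to fix the degree $t$ once and for all and then let $N\to\infty$, so that the only randomness to control is the empirical-to-population convergence of the finitely many moments entering $\M_t(\phi_{j,N})$. First I would take $t_\varepsilon$ to be exactly the threshold produced by Theorem \ref{th-main} and fix any $t>t_\varepsilon$. By \eqref{aux-1} this already supplies a \emph{deterministic, uniform} margin for the population functions: there is $a_j>0$ with $\Lambda^{\phi_k}_t(\x)<\Lambda^{\phi_j}_t(\x)-a_j$ for every $\x\in\X_j^\varepsilon$ and every $k\neq j$. Setting $a:=\min_{j\in[m]}a_j>0$, this $a$ is the margin the empirical fluctuations must not exceed.

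Next I would invoke the strong law of large numbers at the level of moments. For each class $k\in[m]$ and each multi-index $\balpha$ with $\vert\balpha\vert\leq 2t$, the empirical moment $\phi_{k,N}(\balpha)=\frac1N\sum_i\x(i)^{\balpha}$ is an average of i.i.d. bounded random variables (bounded since $\cX_k$ is compact), hence converges almost surely to $\phi_k(\balpha)$. As there are only finitely many such pairs $(k,\balpha)$, on a single event $\Omega^*$ of probability $1$ we get $\M_t(\phi_{k,N})\to\M_t(\phi_k)$ entrywise for every $k$. Under the density-bounded-below hypothesis each $\phi_k$ has support with nonempty interior, so $\M_t(\phi_k)\succ0$ is nonsingular; by continuity of matrix inversion at a nonsingular matrix, on $\Omega^*$ the matrix $\M_t(\phi_{k,N})$ is nonsingular for all large $N$ and $\M_t(\phi_{k,N})^{-1}\to\M_t(\phi_k)^{-1}$.

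From here I would upgrade pointwise to \emph{uniform} convergence of the Christoffel functions on the compact set $\cX$. Writing $\Lambda^{\phi_{k,N}}_t(\x)^{-1}=\v_t(\x)^T\M_t(\phi_{k,N})^{-1}\v_t(\x)$, the map $\x\mapsto\v_t(\x)$ is continuous, hence bounded on the compact $\cX$; since the matrix inverses converge in norm and $\Lambda^{\phi_k}_t$ is continuous and strictly positive on $\cX$ (so its reciprocal is bounded away from $0$), the functions $\Lambda^{\phi_{k,N}}_t$ converge to $\Lambda^{\phi_k}_t$ uniformly on $\cX$ along $\Omega^*$. Consequently, on $\Omega^*$ there is an almost surely finite, sample-path-dependent index $N_0$ with $\sup_{\x\in\cX}\vert\Lambda^{\phi_{k,N}}_t(\x)-\Lambda^{\phi_k}_t(\x)\vert<a/2$ for all $k$ and all $N\geq N_0$. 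Combined with the margin $a$, this yields, for every $\x\in\X_j^\varepsilon$ and every $k\neq j$,
\[
\Lambda^{\phi_{j,N}}_t(\x)\,>\,\Lambda^{\phi_j}_t(\x)-\tfrac a2\,\geq\,\Lambda^{\phi_k}_t(\x)+\tfrac a2\,>\,\Lambda^{\phi_{k,N}}_t(\x),
\]
so that $\hat{f}^N_t(\x)=\arg\max_k\Lambda^{\phi_{k,N}}_t(\x)=j$ for every $\x\in\X_j^\varepsilon$ once $N\geq N_0$; since $\P(\Omega^*)=1$, this is the claimed almost-sure conclusion.

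I expect the only genuine subtlety to be the uniform (rather than merely pointwise) convergence of $\Lambda^{\phi_{k,N}}_t$ on $\cX$ together with the guarantee that $\M_t(\phi_{k,N})$ becomes nonsingular for large $N$; both hinge on the fact that $t$ is held fixed, so that finitely many moments suffice and the $N$--$t$ coupling of \cite{CD-2022} is \emph{not} needed here. Everything else reduces to the strong law of large numbers, the continuity of matrix inversion, and the deterministic margin \eqref{aux-1} already established in the proof of Theorem \ref{th-main}.
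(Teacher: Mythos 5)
Your proof is correct, and its overall architecture is the same as the paper's: fix $t>t_\varepsilon$ where $t_\varepsilon$ and the margin $a_j>0$ come from Theorem \ref{th-main} via \eqref{aux-1}, establish almost-sure uniform convergence $\Lambda^{\phi_{k,N}}_t\to\Lambda^{\phi_k}_t$ for the fixed $t$, and then absorb the empirical fluctuations into the margin to conclude $\hat f^N_t=j$ on $\X_j^\varepsilon$ for large $N$. Where you differ is in how the middle step is justified: the paper simply cites \cite[Theorem 3.13]{adv-comp}, which gives $\sup_{\x\in\R^n}\vert\Lambda^{\phi_{j,N}}_t(\x)-\Lambda^{\phi_j}_t(\x)\vert\to0$ almost surely, whereas you prove the needed convergence from scratch: strong law of large numbers for the finitely many empirical moments of degree at most $2t$ (bounded since $\cX_k$ is compact), positive definiteness of $\M_t(\phi_k)$ from the density-bounded-below hypothesis, continuity of matrix inversion to get $\M_t(\phi_{k,N})^{-1}\to\M_t(\phi_k)^{-1}$ with eventual nonsingularity, and boundedness of $\v_t$ on the compact $\cX$ plus the uniform lower bound $\Lambda^{\phi_k}_t(\x)^{-1}\geq\lambda_{\min}\bigl(\M_t(\phi_k)^{-1}\bigr)>0$ to pass from uniform convergence of the reciprocals to uniform convergence of the Christoffel functions themselves on $\cX$. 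This buys self-containedness and makes explicit the point (which the paper leaves implicit) that with $t$ held fixed only finitely many moments matter, so no $N$--$t$ coupling is required; what you lose relative to the citation is uniformity over all of $\R^n$, but uniformity on the compact $\cX\supset\X_j^\varepsilon$ is all the argument needs. Your chain $\Lambda^{\phi_{j,N}}_t(\x)>\Lambda^{\phi_j}_t(\x)-a/2\geq\Lambda^{\phi_k}_t(\x)+a/2>\Lambda^{\phi_{k,N}}_t(\x)$ is exactly the paper's final step with $a=\min_j a_j$.
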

\begin{proof}
  By Theorem \ref{th-main} there exist $t_\varepsilon$ and $a_j>0$, such that $\Lambda_t^{\phi_j}(\x)-\Lambda^{\phi_k}_t(\x)>a_j$  for all $k\neq j$, all $t\geq t_\varepsilon$ and all $\x\in\X^\varepsilon_j$; see \eqref{aux-1}. On the other hand,
  with $t$ fixed, by \cite[Theorem 3.13]{adv-comp}, 
 \[\mbox{For every $j\in [m]$:}\quad
 \sup_{\x\in\R^n}\,\vert\,\Lambda^{\phi_{j,N}}_t(\x)-\Lambda^{\phi_j}_t(\x)\,\vert\,\stackrel{a.s.}{\to}\,0\,,\quad\mbox{as $N\to\infty$\,,}\]
 where the \emph{``a.s."} is with respect to the random sample $\mathrm{Tr}^j_N\subset\X_j$.
Hence for every $t>t_\varepsilon$ fixed, with probability $1$ (with respect to the $m$ random 
samples $\mathrm{Tr}^k_N\subset\X_k$, $k\in [m]$)
\[\forall\x\in\X^{\varepsilon}_j\,,\:\forall k\neq j\,:\quad
\Lambda^{\phi_{k,N}}_t(\x)\,<\,\Lambda^{\phi_{j,N}}_t(\x)-a_j/2\,,\]
for sufficiently large $N$, and therefore $\hat{f}^N_{t}(\x)=j$ for all $\x\in\X^{\varepsilon}_j$, $j\in [m]$.
 \end{proof}
 
 \subsection{The general case where the supports  $\X_j$ are not disjoint}
 We next briefly consider the case where  the assumption $\X_i\cap\X_j=\emptyset$ for all $i\neq j$, is \emph{not} satisfied. That is,  misclassifications occur or some points $\x\in\X$ may indeed belong to several classes, as can be the case in some practical situations.

So let $\mu$ on $\cX\times \Y$, $\varphi(dy\vert\cdot)$ on 
$\Y$ and $\phi$ on $\cX$ be as in  \eqref{disintegration}, but now with possibly 
  $\X_i\cap\X_j\neq\emptyset$ for some $i\neq j$. For each $j\in [m]$, introduce the measures
  $\phi_j$ on $\X$, $j\in [m]$, defined by:
 \begin{equation}
 \label{phiji}
\phi_j(d\x)\,:=\, \varphi(j\,\vert\,\x)\,\phi(d\x)\,,\quad  j\in \Y\,.
 \end{equation}
  So in contrast to section \ref{real-variety}, $\phi_j(\X_i)\neq0$ is allowed for $i\neq j$.
  Then an analogue of Theorem \ref{theo-1} reads:
  \begin{theorem}
\label{theo-2}
For each $j\in\Y$, let $(P^j_{\balpha})_{\balpha\in\N}\subset\R[\x]$ be a family of polynomials
that are orthonormal with respect to the measure $\phi_j$ on $\cX$ defined in \eqref{phiji}, and
let $\Lambda^{\phi_j}_t$ be the standard Christoffel function associated with 
$\phi_j$. Then :

(i) The family $(\theta_j(y)\,P^j_{\balpha}(\x))_{\balpha\in\N^n_t}\subset\R[\x,y]$ 
is an orthonormal basis of $\L^2_t(\mu)$.

(ii) The Christoffel function $\hat{\Lambda}^\mu_t$ defined in \eqref{CF-V-variant} satisfies
\begin{eqnarray}
\label{theo-2-1}
\hat{\Lambda}^\mu_t(\x,y)^{-1}&=&
\sum_{j\in \Y}\theta_j(y)^2\,\sum_{\balpha\in\N^n_t}P^j_{\balpha}(\x)^2\,,\quad\forall (\x,y)\in\R^n\times\Y\\
\label{theo-2-2}
&=&\sum_{j\in\Y}\theta_j(y)^2\,\Lambda^{\phi_j}_t(\x)^{-1}\,=\,
\sum_{j\in [m]}\delta_{y=j}\,\Lambda^{\phi_j}_t(\x)^{-1}\,,\quad\forall (\x,y)\in\R^n\times\Y\,.
\end{eqnarray}
\end{theorem}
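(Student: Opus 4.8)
The plan is to follow the proof of Theorem~\ref{theo-1} almost verbatim, the point being that the disjointness of the supports $\cX_j$ was never essential to the \emph{algebraic} structure of $\L^2_t(\mu)$ nor to the spanning argument; it entered only through the orthonormality computation, and that computation survives the passage to overlapping supports provided one integrates against the disintegration \eqref{disintegration} rather than against the decomposition $\mu=\sum_j\mu_j$ of the graph case. Accordingly, the only step that genuinely needs to be redone is the verification that $(\theta_j(y)\,P^j_\balpha(\x))_{\balpha\in\N^n_t,\,j\in\Y}$ is orthonormal for the inner product $\int_\bom(\cdot)(\cdot)\,d\mu$, where now the $P^j_\balpha$ are orthonormal with respect to the measure $\phi_j=\varphi(j\,\vert\,\cdot)\,\phi$ of \eqref{phiji}.

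For part (i), I would first dispose of the off-diagonal case $i\neq j$: since $\theta_i,\theta_j$ are the Lagrange interpolants at $\{1,\dots,m\}$, we have $\theta_i(k)\theta_j(k)=\delta_{ik}\delta_{jk}=0$ for every $k\in\Y$, so $\theta_i(y)\theta_j(y)\equiv0$ on $\cX\times\Y$ and the corresponding integral vanishes identically, exactly as before; this part uses nothing about the supports. For the diagonal case $i=j$ I would integrate out $y$ first: for fixed $\x$, $\int_\Y\theta_i(y)^2\,\varphi(dy\,\vert\,\x)=\sum_{k=1}^m\theta_i(k)^2\,\varphi(k\,\vert\,\x)=\varphi(i\,\vert\,\x)$, again by the interpolation property $\theta_i(k)=\delta_{ik}$. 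Hence $\int_\bom\theta_i(y)^2\,P^i_\balpha(\x)\,P^i_\bbeta(\x)\,d\mu=\int_\cX P^i_\balpha\,P^i_\bbeta\,\varphi(i\,\vert\,\cdot)\,d\phi=\int_\cX P^i_\balpha\,P^i_\bbeta\,d\phi_i=\delta_{\balpha=\bbeta}$, by the very definition \eqref{phiji} of $\phi_i$ and the orthonormality of $(P^i_\balpha)$. The spanning and cardinality argument ($r(t)=m{n+t\choose t}$ generators for a space of that dimension) is identical to Theorem~\ref{theo-1} and does not see the supports at all, so $(\theta_j\,P^j_\balpha)$ is again an orthonormal basis of $\L^2_t(\mu)$.

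For part (ii) I would simply reuse the closed-form computation from Theorem~\ref{theo-1}(ii): once an orthonormal basis of $\L^2_t(\mu)$ is in hand, the inverse Christoffel function $\hat\Lambda^\mu_t(\x,y)^{-1}$ is the diagonal sum of squares of that basis evaluated at $(\x,y)$ (equivalently, solving the convex program \eqref{pb-convex} through its first-order optimality condition yields the reproducing-kernel form \eqref{alternative}). This gives \eqref{theo-2-1} verbatim. Then \eqref{theo-2-2} follows in two moves: recognising $\sum_{\balpha}P^j_\balpha(\x)^2=\Lambda^{\phi_j}_t(\x)^{-1}$ from the definition \eqref{def-christo-1} of the CF of $\phi_j$, and collapsing $\theta_j(y)^2$ to $\delta_{y=j}$ for $y\in\Y$ using $\theta_j(k)^2=\delta_{jk}$.

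The main thing to watch is the diagonal orthonormality step: the temptation is to copy the line $\sum_j\int(\cdot)\,d\mu_j=\int(\cdot)\,d\mu_i$ from Theorem~\ref{theo-1}, but that decomposition is no longer available since $\mu$ is no longer concentrated on the graph of $f$. The correct replacement is to integrate the $y$-variable against $\varphi(dy\,\vert\,\x)$ and use $\int_\Y\theta_i(y)^2\,\varphi(dy\,\vert\,\x)=\varphi(i\,\vert\,\x)$, which is precisely what manufactures the conditional density defining $\phi_i$. A secondary point, implicit throughout, is that $\phi_j$ as defined in \eqref{phiji} must have a nonsingular moment matrix $\M_t(\phi_j)$ for the orthonormal family $(P^j_\balpha)$ and the Christoffel function $\Lambda^{\phi_j}_t$ to be well defined; I would record this as a standing nondegeneracy hypothesis (for instance $\phi_j$ supported on a set with nonempty interior). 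With these in place the argument is otherwise routine and purely mechanical.
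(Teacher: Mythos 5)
Your proposal is correct and follows essentially the same route as the paper: the off-diagonal case via $\theta_i\theta_j\equiv 0$ on $\cX\times\Y$, the diagonal case by integrating $y$ against the disintegration $\varphi(dy\,\vert\,\x)$ so that $\int_\Y\theta_i(y)^2\,\varphi(dy\,\vert\,\x)=\varphi(i\,\vert\,\x)$ manufactures exactly the measure $\phi_i$ of \eqref{phiji}, and part (ii) by repeating the argument of Theorem \ref{theo-1}(ii) --- which is precisely what the paper does (it even dismisses (ii) with ``similar to Theorem \ref{theo-1}''). Your flagged warning (that the graph decomposition $\mu=\sum_j\mu_j$ is no longer available and must be replaced by the conditional-probability computation) identifies the one genuinely new step correctly, and your remark that nonsingularity of $\M_t(\phi_j)$ should be a standing hypothesis is a fair point the paper leaves implicit.
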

 \begin{proof}
 (i) If $i\neq j$ then $\theta_i(y)\theta_j(y)=0$ everywhere on the support of $\mu$ and therefore
 \[ \int_{\bom}\theta_i(y)\,P^i _{\balpha}(\x)\,\theta_j(y)\,P^j_{\bbeta}(\x)\,d\mu(\x,y)\,=\,0\,,\]
whereas if $i=j$ then
\begin{eqnarray*}
 \int_{\bom}\theta_i(y)^2\,P^i _{\balpha}(\x)\,P^i_{\bbeta}(\x)\,d\mu(\x,y)&=&
  \int_{\X}P^i _{\balpha}(\x)\,P^i_{\bbeta}(\x)\,\left(\int_{\Y}\theta_i(y)^2\varphi(dy\vert\x)\right)\,\phi(d\x)\\
&=&\int_{\X}P^i _{\balpha}(\x)\,P^i_{\bbeta}(\x)\,\left(\sum_{j\in [m]}\theta_i(j)^2\varphi(j\,\vert\,\x)\right)\phi(d\x)\\
&=&\int_{\X}P^i _{\balpha}(\x)\,P^i_{\bbeta}(\x)\,\phi_i(d\x)\,=\,\delta_{\balpha=\bbeta}\,.
\end{eqnarray*}
(ii) The rest of the proof is similar to that of Theorem \ref{theo-1}.
\end{proof}
By \eqref{theo-2-2}, $\Lambda^{\phi_j}_t$ is easily obtained 
as $\hat{\Lambda}^\mu_t(\cdot,j)$, $j\in [m]$, and 
$\hat{\Lambda}^\mu_t(\x,y)$ is in turn obtained for instance via \eqref{alternative} from 
the moment matrix $\hat{\M}_t(\mu)$ of the joint distribution $\mu$.
As the CF is an appropriate tool for support inference, notice that intersections of 
super level sets $\G_{i,\gamma}\cap\G_{j,\gamma}$, with $\G_{i,\gamma}:=\{\x:\Lambda^{\phi_i}_t(\x)\geq\gamma\}$, should provide indications on whether $\X_i\cap\X_j=\emptyset$ if $i\neq j$.

Once again the CF $\hat{\Lambda}^\mu_t$ has the simple and nice expression
\eqref{theo-2-2} only in terms of the CF's $\Lambda^{\phi_j}_t$, which suggests to define
a classifier $\hat{f}_t(\x)$ exactly as in \eqref{ideal-classifier}. 
The only difference is the meaning of $\phi_j$ and its implications. For instance if $\X_i\cap\X_j\neq\emptyset$ then 
a point $\x\in \X$ can belong to ${\rm supp}(\phi_i)\cap{\rm supp}(\phi_j)$ with $i\neq j$, and therefore the two scores $\Lambda^{\phi_i}_t(\x)$ and $\Lambda^{\phi_j}_t(\x)$ can be comparable even for large $t$, whereas before for sufficiently large $t$, the score $\Lambda^{\phi_j}_t(\x)$ (with $j=\mathrm{class}(\x)$) clearly dominates all other scores. In particular there is no analogue of Theorem \ref{th-main}. Evaluating how efficient the resulting classifier
$\hat{f}_t$ can be in a practical empirical context of 
sampled data as in section \ref{practical}, is beyond the scope of the present note.

\section{Conclusion}
The Christoffel function can provide a simple tool in supervised classification, with 
some theoretical guarantees. However to obtain $\Lambda^{\phi_{j,N}}_t$ \emph{explicitly} one must
handle matrices of size ${n+t\choose n}$ (inversion or eigenvectors) with a computational cost 
$O(t^n)$ that grows rapidly with $t$. Therefore so far, in this form this tool is limited to small dimension problems.
On the other hand, evaluation of $\Lambda^{\phi_{j,N}}_t(\boldsymbol{\xi})$ 
at a point $\boldsymbol{\xi}\in\R^n$ via \eqref{CF-V} only requires to solve  a simple convex quadratic 
optimization problem, which can be done efficiently even for large $n$. Finally a detailed 
analysis of possible learning rates that can be obtained with this method remains to be done.

\end{document}